\newcommand{\udots}{\mathinner{\mskip1mu\raise1pt\vbox{\kern7pt\hbox{.}}
\mskip2mu\raise4pt\hbox{.}\mskip2mu\raise7pt\hbox{.}\mskip1mu}}
\newcommand{\SO}{{\mathcal{O}}}
\newcommand{\SQ}{{\mathcal{Q}}}
\newcommand{\PP}{\mathbb{P}}
\newcommand{\Pic}{\operatorname{Pic}}
\newcommand{\Sym}{\operatorname{Sym}}
\newcommand{\Aut}{\operatorname{Aut}}
\newtheorem{proposition}{Proposition}[section]
\newtheorem{theorem}[proposition]{Theorem}
\newtheorem{lemma}[proposition]{Lemma}
\numberwithin{equation}{section}
\title[Automorphisms of a symmetric product of a curve]{Automorphisms of a
symmetric product of a curve}
\author[I. Biswas]{Indranil Biswas}
\address{School of Mathematics, Tata Institute of Fundamental
Research, Homi Bhabha Road, Bombay 400005, India}
\email{indranil@math.tifr.res.in}
\author[T. L. G\'omez]{Tom\'as L. G\'omez}
\address{Instituto de Ciencias Matem\'aticas (CSIC-UAM-UC3M-UCM),
C/ Nicolas Cabrera 15, 28049 Madrid, Spain}
\email{tomas.gomez@icmat.es}
\thanks{The authors want to thank the support by MINECO (ICMAT Severo
Ochoa project SEV-2011-0087 and grant MTM2013-42135-P) and the
Spanish Ministry of Education for the grant ``Salvador de
Madariaga'' PRX14/00508. We acknowledge the support of the grant 612534 MODULI
within the 7th European Union Framework Programme.} 
\subjclass[2010]{14H40, 14J50}
\keywords{Symmetric product; automorphism; Torelli theorem.}
\begin{document}

\begin{abstract}
We show that all the automorphisms of the symmetric product 
$\Sym^d (X)$, $d\,>\,2g-2$, of a smooth projective curve $X$
of genus $g\,>\,2$ are induced by automorphisms of $X$.
\end{abstract}

\maketitle

\section{Introduction}

Let $X$ be a smooth projective curve of genus $g$, with $g\,>\,2$,
over an algebraically closed field. Take
any integer $d\,>\,2g-2$. Let $\Sym^d (X)$ be the $d$-fold symmetric product
of $X$. Our aim here is to study the group $\Aut(\Sym^d (X))$ of automorphisms
of the algebraic variety $\Sym^d (X)$. 
An automorphism $f$ of the algebraic curve $X$ produces an algebraic automorphism
$\rho(f)$ of $\Sym^d (X)$ that sends any $\{x_1\, , \cdots\, , x_d\}\,\in\, \Sym^d (X)$ to
$\{f(x_1)\, , \cdots\, , f(x_d)\}$. This map
$$
\rho,:\, \Aut(X) \,\longrightarrow\, \Aut(\Sym^d (X))\, ,\ \
f\,\longmapsto\,\rho(f)
$$
is a homomorphism of groups.

\begin{theorem}\label{thmi}
The natural homomorphism 
$$
\rho\,:\, \Aut(X) \,\longrightarrow\, \Aut(\Sym^d X)
$$
is an isomorphism.
\end{theorem}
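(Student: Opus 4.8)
The plan is to prove $\rho$ is an isomorphism by separately establishing injectivity and surjectivity. Injectivity is the routine part: if $\rho(f)$ is the identity on $\Sym^d(X)$, then it fixes in particular every point of the form $\{x,x,\dots,x\}$ (the small diagonal), which forces $f(x)=x$ for all $x$, hence $f=\id$. So the entire content lies in surjectivity, i.e.\ showing every automorphism $\Phi$ of $\Sym^d(X)$ arises from an automorphism of $X$.

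The key structural fact I would exploit is that for $d>2g-2$, the Abel--Jacobi map $\pi\colon \Sym^d(X)\to \Pic^d(X)$ exhibits $\Sym^d(X)$ as a projective bundle: by Riemann--Roch every effective divisor $D$ of degree $d$ has $h^0(D)=d-g+1$ independent of $D$, so $\pi$ is a $\PP^{d-g}$-bundle, namely $\PP(\pi_*\SL)$ for a Poincar\'e line bundle $\SL$. The first main step is therefore to show that an arbitrary $\Phi\in\Aut(\Sym^d(X))$ respects this fibration, i.e.\ descends to an automorphism $\ol\Phi$ of $\Pic^d(X)$ with $\pi\circ\Phi=\ol\Phi\circ\pi$. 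I would do this by a canonical, intrinsic description of $\pi$: identify the fibers of $\pi$ as exactly the positive-dimensional subvarieties of $\Sym^d(X)$ of a distinguished type (the maximal linear systems / the rational curves through a general point, or the fibers of the map contracting the relevant extremal ray), so that any automorphism must permute them. Concretely, one expects the cohomology class of a fiber to generate an extremal ray of the cone of curves, or the fibers to be characterized as the images of the only $\PP^{d-g}$'s, making the fibration $\Phi$-invariant for purely geometric reasons. This yields the commuting square with $\ol\Phi\in\Aut(\Pic^d(X))$.

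Next I would analyze $\ol\Phi$ on the base. Automorphisms of the abelian variety torsor $\Pic^d(X)$ are translations composed with automorphisms of the abelian variety $\Pic^0(X)=J(X)$ preserving its structure. Here the Torelli theorem enters decisively: the principally polarized abelian variety $(J(X),\Theta)$ determines $X$, and automorphisms of $J(X)$ preserving the theta polarization (up to sign) come from automorphisms of $X$ (together with the hyperelliptic involution ambiguity, which for $g>2$ is controlled). The step I expect to be the main obstacle is pinning down precisely which automorphism of $\Pic^d(X)$ can occur: I must show $\ol\Phi$ preserves the theta divisor up to translation, which requires relating the polarization to data visible on $\Sym^d(X)$ — e.g.\ via the class of $\pi$ and the relative $\SO(1)$, or via the locus of special divisors. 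The expected output is that $\ol\Phi$ is induced by some $f\in\Aut(X)$ up to a translation, and the translation part is then forced to vanish by compatibility with the effective locus (the image of $\Sym^d$, or the behavior on diagonals).

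Finally, having produced a candidate $f\in\Aut(X)$ with $\rho(f)$ inducing the same base map $\ol\Phi$, I would replace $\Phi$ by $\rho(f)^{-1}\circ\Phi$ and reduce to the case where $\Phi$ covers the identity on $\Pic^d(X)$. Such a $\Phi$ is an automorphism of the $\PP^{d-g}$-bundle $\PP(\pi_*\SL)$ over $\Pic^d(X)$ lying over $\id$, hence is given fiberwise by $\PGL_{d-g+1}$-data, i.e.\ comes from an automorphism of the vector bundle $\pi_*\SL$ up to twist by a line bundle on the base. I would then argue that the only such bundle automorphisms preserving the tautological embedding of $X$ (via the diagonal $X\hookrightarrow \Sym^d(X)$, $x\mapsto (d-1)\{x_0\}+\{x\}$ type sections, or via the incidence structure of the divisors) are scalars, so that $\Phi=\id$. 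This last rigidity of the projective bundle is where I would expect a second, more technical difficulty, handled by showing $\pi_*\SL$ is a \emph{stable} or otherwise rigid bundle with only scalar automorphisms, forcing $\Phi=\rho(f)$ and completing the proof that $\rho$ is surjective, hence an isomorphism.
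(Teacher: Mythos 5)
Your overall architecture agrees with the paper's: identify $\Sym^d(X)$ with a projective bundle $P(E)$ over $\Pic^d(X)$ via Abel--Jacobi, descend an automorphism to the base, invoke Torelli, and finish by rigidity of the Picard bundle $E$ (whose stability, due to Ein--Lazarsfeld, is indeed what the paper uses for the last step). But the two places you yourself flag as "the main obstacle" and "a second, more technical difficulty" are exactly where the proposal has genuine gaps, and the stopgap ideas you offer for them do not work. First, you give no argument that the induced automorphism $\ol\Phi$ of $\Pic^d(X)$ preserves the theta polarization. The paper's route is concrete and not replaceable by generalities: from $\alpha^*E\cong E\otimes L$ one gets $c_i(\alpha^*E)=\theta'^{\,i}/i!$ with $\theta'=\theta+c_1(L)$, while $c_i(E)=\theta^i/i!$; Fakhruddin's lemma (this is where $g>2$ enters) yields $\theta^i=\theta'^{\,i}$ for $i>1$, and then the Collino--Ran criterion applied to the triple $(J(X),\theta',X)$ forces $\theta'$ to be a theta divisor up to translation. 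Without some version of this computation the Torelli step cannot be launched.

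Second, your proposal to kill the residual translation "by compatibility with the effective locus" is vacuous in this range: for $d>2g-2$ every line bundle of degree $d$ is effective, so the effective locus is all of $\Pic^d(X)$ and carries no information. Likewise the inversion ambiguity $\iota$ from the strong Torelli theorem is not "controlled for $g>2$" by itself; it must be actively excluded for non-hyperelliptic $X$. The paper handles both points with a Kempf-type statement about Picard bundles: $H^1(P^d, L_j\otimes W(\SQ))\neq 0$ if and only if $-j$ lies on the embedded curve $I_{\SQ}(X)$, whence $T_j^*(M\otimes W(\SQ))\cong W(\SQ)$ forces $j=0$ and $M=\SO_{P^d}$, and an isomorphism involving $i^*$ forces $X$ hyperelliptic. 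This cohomological input (an extension of Kempf's results to $d>2g-2$, proved in the paper's Section~\ref{sec:preliminaries}) is the missing engine of your final two steps. A smaller remark: your descent of $\Phi$ to $\Pic^d(X)$ via an intrinsic characterization of the fibers (extremal rays, distinguished $\PP^{d-g}$'s) would require its own proof; the paper gets the commuting square for free from the universal property of the Albanese map, since the Albanese variety of $\Sym^d(X)$ is $J(X)$.
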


The idea of the proof is as follows.
The homomorphism $\rho$ is clearly injective, so we have to show that
it is also surjective.
The Albanese variety of $\Sym^d (X)$ is the Jacobian $J(X)$ of $X$. So an
automorphism of $\Sym^d (X)$ induces an automorphism of $J(X)$. Using
results of Fakhruddin and Collino--Ran, we show that the induced
automorphisms of $J(X)$ respects the theta divisor up to
translation. Invoking the strong form of the Torelli theorem for the
Jacobian, it follows that such automorphisms are generated by automorphisms of the
curve $X$, translations of $J(X)$, and the inversion of $J(X)$ that sends each line bundle
to its dual. Using a result of Kempf we show that if an automorphism $\alpha$
of $J(X)$ lifts to $\Sym^d (X)$, then $\alpha$ is induced by an
automorphism of $X$, and this finishes the proof.

It should be clarified that we need a slight generalisation of the result of Kempf
\cite{K}; this is proved in Section
\ref{sec:preliminaries}. The proof of Theorem \ref{thmi} is in Section
\ref{sec:proof}.

\medskip
\noindent{\bf Acknowledgements.}\, We thank N. Fakhruddin for helpful
discussions. The second author also wants to thank Tata Institute for
Fundamental Research and the University of Warwick, where part of this
 work was done. 

\section{Some properties of the Picard bundle}
\label{sec:preliminaries}

A \emph{branding} of a Picard variety $P^d\,=\,\Pic^d(X)$ is a Poincar\'e line bundle
$\SQ$ on $X\times P^d$ \cite[p. 245]{K}. Two brandings differ by the pullback of
a line bundle on $P^d$.

The degree of a line bundle $\xi$ over a smooth projective variety $Z$ is the class
of the first Chern class $c_1(\xi)$ in the N\'eron-Severi group
$\text{NS}(Z)$, so the line bundles of
degree zero on $Z$ are classified by the Jacobian $J(Z)$.
A \emph{normalised branding} is a branding such that $\SQ\vert_{\{x\}\times P^d}$ has
degree zero for one point $x\,\in\, X$ (equivalently, for all points of $X$).
Two normalised brandings differ by the pullback of a degree zero line
bundle on $P^d$.

The natural projection of $X\times P^d$ to $P^d$ will be denoted by $\pi_{P^d}$.
A normalised branding $\SQ$ induces an embedding 
\begin{equation}\label{e1}
I_{\SQ}\,:\,X\,\longrightarrow\, J(P^d)\,=:\,J
\end{equation}
 that sends any $x\,\in\, X$ to the
point of $J$ corresponding to the line bundle $\SQ\vert_{\{x\}\times P^d}$.
If $\SQ'\,=\,\SQ\otimes \pi^*_{P^d} L_j$, where
$L_j$ is the line bundle corresponding to a point $j\,\in\, J(P^d)\,=\,J$, then
we have $I_{\SQ'}\,=\,I_{\SQ}+j$.

Assume that $d\,>\,2g-2$. A \emph{Picard bundle} $W(\SQ)$ on $P^d$ is the vector bundle 
$\pi_{P^d*} \SQ$, where $\SQ$ is a normalised branding. From the projection formula it
follows that two Picard bundles differ by tensoring with a degree zero line 
bundle on $P^d$.

There is a version of the following proposition for $d\,<\,0$ in 
\cite[Corollary 4.4]{K} (for negative degree, the Picard bundle is
defined using the first direct image).

\begin{proposition}
\label{K4.4}
Let $d\,>\, 2g-2$.
\begin{enumerate}
\item $H^1(P^d,\,W(\SQ))$ is non-zero (in fact, it is one-dimensional if it is
non-zero) if and only if $0\,\in\, I_\SQ(X)$.\label{part1}

\item Let $L_j$ be the line bundle on $P^d$ corresponding to a point $j\,\in\,
J$. Then $H^1(P^d,\, L_j\otimes W(\SQ))$ is non-zero (in fact, it is
one-dimensional if it is non-zero) if and only if $-j\,\in\, I_{\SQ}(X)$.
\label{part2}
\end{enumerate}
\end{proposition}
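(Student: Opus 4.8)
Write $\pi_X\,:\,X\times P^d\to X$ for the projection to the curve. The plan is to compute $H^1(P^d,W(\SQ))$ (and its twists) by pushing the branding forward along the two projections of $X\times P^d$ and comparing the resulting Leray spectral sequences. First I would exploit the hypothesis $d>2g-2$: for every $\xi\in P^d$ one has $H^1(X,\xi)=0$, so $R^q\pi_{P^d*}\SQ=0$ for all $q\ge1$, the sheaf $W(\SQ)=\pi_{P^d*}\SQ$ is locally free of rank $d-g+1$, and the Leray spectral sequence for $\pi_{P^d}$ degenerates to give $H^i(P^d,W(\SQ))\cong H^i(X\times P^d,\SQ)$ for every $i$. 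This reduces part \eqref{part1} to showing that $H^1(X\times P^d,\SQ)$ is one-dimensional when $0\in I_\SQ(X)$ and vanishes otherwise.

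For the latter I would instead push forward along $\pi_X$. Since $\SQ\vert_{\{x\}\times P^d}=L_{I_\SQ(x)}$, the seesaw principle gives $\SQ\cong(I_\SQ\times\id)^*\SP\otimes\pi_X^*M$, where $\SP$ is the normalised Poincaré bundle on $J\times P^d$ and $M$ is a line bundle on $X$; the factor $\pi_X^*M$ will be harmless, as it only twists the (torsion) higher direct images by a line bundle on $X$. The key input is the Fourier--Mukai computation $Rp_{J*}\SP\cong k(0)[-g]$, where $p_J\,:\,J\times P^d\to J$ is the projection and $k(0)$ is the skyscraper at the origin $0\in J$ (the class of $\SO_{P^d}$), placed in cohomological degree $g$. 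Because $p_J$ is proper and flat, derived base change along the Cartesian square whose bottom edge is $I_\SQ\,:\,X\to J$ yields
\begin{equation}\label{eq:bc}
R\pi_{X*}\SQ\;\cong\;\big(LI_\SQ^*\,k(0)\big)[-g]\otimes M .
\end{equation}

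From \eqref{eq:bc} the two cases fall out by a local Tor computation. If $0\notin I_\SQ(X)$ then $I_\SQ^*k(0)=0$, so $R\pi_{X*}\SQ=0$ and a fortiori $H^1(X\times P^d,\SQ)=0$. If $0=I_\SQ(x_0)$ for the (unique, since $I_\SQ$ is an embedding) point $x_0$, then $I_\SQ$ is a closed regular embedding of codimension $g-1$ between smooth varieties, so its Koszul resolution gives $\mathcal{H}^{-i}(LI_\SQ^*k(0))=\op{Tor}_i^{\SO_J}(\SO_X,k(0))=k(x_0)^{\oplus\binom{g-1}{i}}$. Feeding this into \eqref{eq:bc} and shifting by $g$, every $R^q\pi_{X*}\SQ$ is a skyscraper supported at $x_0$, with $R^0\pi_{X*}\SQ=0$ and $R^1\pi_{X*}\SQ\cong k(x_0)$ one-dimensional. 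Since all higher direct images are then skyscrapers, the Leray spectral sequence for $\pi_X$ degenerates and $H^1(X\times P^d,\SQ)\cong H^0(X,R^1\pi_{X*}\SQ)\cong k(x_0)$, proving part \eqref{part1}.

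Finally, part \eqref{part2} I would deduce from part \eqref{part1} by twisting. By the projection formula $L_j\otimes W(\SQ)=\pi_{P^d*}(\SQ\otimes\pi_{P^d}^*L_j)=W(\SQ')$, where $\SQ'=\SQ\otimes\pi_{P^d}^*L_j$ is again a normalised branding with associated embedding $I_{\SQ'}=I_\SQ+j$. Applying part \eqref{part1} to $\SQ'$, the group $H^1(P^d,L_j\otimes W(\SQ))$ is nonzero (hence one-dimensional) exactly when $0\in I_\SQ(X)+j$, i.e.\ when $-j\in I_\SQ(X)$. The one genuine obstacle in this argument is the failure of cohomology and base change at the jumping point $x_0$: the fibrewise cohomology $H^q(P^d,L_{I_\SQ(x)})$ leaps from $0$ to $\binom{g}{q}$ there and does not by itself determine the direct image sheaves. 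It is precisely the Mukai concentration statement $Rp_{J*}\SP\cong k(0)[-g]$ (equivalently, Kempf's local analysis of the Picard sheaf near the theta divisor) that resolves this and pins down $R^1\pi_{X*}\SQ$ as one-dimensional.
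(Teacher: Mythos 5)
Your argument is correct, and it reaches the conclusion by a genuinely different route from the paper. The paper's proof is essentially a citation: vanishing of $H^1(P^d,W(\SQ))$ when $0\,\notin\, I_\SQ(X)$ is \cite[Theorem 4.3(c)]{K}, and the one-dimensionality when $0\,\in\, I_\SQ(X)$ comes from \cite[Theorem 4.3(d)]{K}, which identifies $H^1(P^d,W(\SQ))$ with the sections of an explicit skyscraper sheaf supported on the single point $I_\SQ^{-1}(0)$; part \eqref{part2} is then deduced by the same twisting trick you use. Your proof replaces Kempf's black box by a self-contained Fourier--Mukai computation: the reduction $H^i(P^d,W(\SQ))\cong H^i(X\times P^d,\SQ)$ via the degenerate Leray sequence for $\pi_{P^d}$ (valid since $d>2g-2$ kills the higher direct images), the seesaw identification $\SQ\cong(I_\SQ\times\id)^*\SP\otimes\pi_X^*M$, the Mumford--Mukai concentration $Rp_{J*}\SP\cong k(0)[-g]$, flat base change along $I_\SQ$, and the Koszul computation $\op{Tor}_i^{\SO_J}(\SO_X,k(0))\cong k(x_0)^{\oplus\binom{g-1}{i}}$ for the regular embedding $X\hookrightarrow J$ of codimension $g-1$. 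All of these steps check out (in particular $R^0\pi_{X*}\SQ=0$ and $R^1\pi_{X*}\SQ\cong k(x_0)$ follow from $\binom{g-1}{g}=0$ and $\binom{g-1}{g-1}=1$, and the second Leray sequence degenerates because the higher direct images are skyscrapers). What your approach buys is transparency and a little more information --- it computes every $H^q(P^d,W(\SQ))$, not just $H^1$, and it isolates exactly where cohomology-and-base-change fails and why the Mukai kernel computation repairs it --- at the cost of being longer than the paper's two-line appeal to \cite{K}. Your handling of part \eqref{part2} coincides with the paper's.
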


\begin{proof}
Part \eqref{part1}. If $0\,\notin\, I_\SQ(X)$, then $H^1(P^d,\,W(\SQ))\,=\,
0$ by \cite[p. 252, Theorem 4.3(c)]{K}. 
Fix a line bundle $M$ on $X$ of degree one, and consider the associated
Abel-Jacobi map
$$
X\,\longrightarrow\, J(X),\ \ x\,\longmapsto\, M\otimes {\mathcal O}_X(-x)\, .
$$
Let
$N_{X/J(X)}$ be the normal bundle of the image of $X$ under this Abel-Jacobi map. 
If $0\,\in\, I_\SQ(X)$, then using \cite[p. 252, Theorem 4.3(d)]{K} it follows that
$H^1(P^d,\,W(\SQ))$ is isomorphic to the space of sections of the skyscraper sheaf on $X$
$$
K_X^{-1}\otimes \wedge^0 N_{X/J}\otimes M^d|_{I_\SQ^{-1}(0)}\, ,
$$
where $I_{\SQ}$ is constructed in \eqref{e1}. But the space of sections of this
skyscraper sheaf is clearly one-dimensional, because $I_\SQ^{-1}(0)$ consists of
one point of $X$.

Part \eqref{part2} follows from part \eqref{part1} because 
$L_j\otimes W(\SQ)\,=\, W(\SQ\otimes\pi^*_{P^d} L_j)$, and
$I_{W(\SQ\otimes\pi^*_{P^d} L_j)}\,= \,I_{W(\SQ)}+j$.
\end{proof}

\begin{proposition}
\label{prop91}
Assume $g(X)\,>\,1$ and $d\,>\,2g-2$. Let $j$ be a point of
${\rm Pic}^0(X)$, and let $T_j\,:\, P^d\,\longrightarrow\, P^d$ be
the translation by $j$. Let $M$ be a degree zero line
bundle on $P^d$. If
$$
T^*_j (M\otimes W(\SQ)) \,\cong\, W(\SQ)\, ,
$$
then $j\,=\,0$ and $M\,=\,\SO_{P^d}$.

Let $i\,:\,P^d\,\longrightarrow\, P^d$ be the inversion given by $z\,\longmapsto\,
-z+2z_0$, where $z_0$ is a fixed point in $P^d$. If
$$
i^*T^*_j (M\otimes W(\SQ)) \,\cong\, W(\SQ)\, ,
$$
then $X$ is a hyperelliptic curve.
\end{proposition}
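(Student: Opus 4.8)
The plan is to test both hypotheses against the cohomological criterion of Proposition \ref{K4.4}\eqref{part2}, which detects the embedded curve $I_\SQ(X)\subset J$ as the locus $\{-k : H^1(P^d,\, L_k\otimes W(\SQ))\neq 0\}$. Write $M\cong L_m$ for the point $m\in J$ corresponding to the degree zero line bundle $M$. Throughout I will use two elementary facts about the abelian variety $P^d$: every degree zero line bundle is invariant under translations, so $T_j^* L_k\cong L_k$; and the inversion $i$ acts as $[-1]$ on $\Pic^0(P^d)=J$, so $i^* L_k\cong L_{-k}$.

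For the first statement, rewrite $L_k\otimes W(\SQ)\cong L_k\otimes T_j^*(L_m\otimes W(\SQ))\cong T_j^*(L_{k+m}\otimes W(\SQ))$, using $L_k\cong T_j^* L_k$. Since $T_j$ is an automorphism, $H^1(P^d,\, L_k\otimes W(\SQ))\cong H^1(P^d,\, L_{k+m}\otimes W(\SQ))$ for every $k\in J$. By Proposition \ref{K4.4}\eqref{part2} the first group is nonzero exactly when $-k\in I_\SQ(X)$ and the second exactly when $-k-m\in I_\SQ(X)$; hence $I_\SQ(X)$ is invariant under translation by $m$. As $I_\SQ(X)$ is a smooth curve of genus $g>2$ embedded in $J$, it cannot be stable under a nonzero translation (otherwise it would be a coset of a positive dimensional abelian subvariety, forcing $g\leq 1$), so $m=0$ and $M\cong\SO_{P^d}$. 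It then remains to see that $j=0$. Now $T_j^* W(\SQ)\cong W(\SQ)$, so taking determinants gives $T_j^*\det W(\SQ)\cong\det W(\SQ)$. Here I would invoke the classical computation that $\det W(\SQ)$ defines, up to a degree zero twist, the principal polarization of $P^d$, i.e. $c_1(W(\SQ))=-[\Theta]$; since the polarization homomorphism attached to a principal polarization is an isomorphism, $T_j^*\det W(\SQ)\cong\det W(\SQ)$ forces $j=0$.

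For the second statement, the same manipulation, now absorbing the inversion, gives $L_k\otimes W(\SQ)\cong i^*\bigl(L_{-k}\otimes T_j^*(L_m\otimes W(\SQ))\bigr)\cong i^* T_j^*(L_{m-k}\otimes W(\SQ))$, whence $H^1(P^d,\, L_k\otimes W(\SQ))\cong H^1(P^d,\, L_{m-k}\otimes W(\SQ))$ for all $k$. Proposition \ref{K4.4}\eqref{part2} then yields $-k\in I_\SQ(X)\iff k-m\in I_\SQ(X)$; writing $v=-k$, this says $I_\SQ(X)$ is stable under the involution $\sigma\colon v\mapsto -v-m$ of $J$. Since $I_\SQ(X)$ is one dimensional, $\sigma$ cannot act as the identity on it (its fixed locus in $J$ is finite), so it induces a nontrivial involution $\tau$ of $X$ characterised by $I_\SQ(\tau(x))=-I_\SQ(x)-m$. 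Adding, $I_\SQ(x)+I_\SQ(\tau(x))=-m$ is independent of $x$; translating this back through the Abel--Jacobi description of $I_\SQ$, it means the effective degree two divisor $x+\tau(x)$ moves in a fixed linear equivalence class $D_0$. The pencil spanned by these divisors is a $g^1_2$, so $X$ carries a degree two map to $\PP^1$ and is therefore hyperelliptic.

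The main obstacle is the passage to $j=0$ in the first part: the $H^1$-criterion records only the support curve $-I_\SQ(X)$, and this is unchanged under translating $W(\SQ)$, so cohomology alone cannot detect $j$. One must bring in a finer invariant, and the natural one is the determinant of the Picard bundle together with the classical identification $\det W(\SQ)\cong\SO_{P^d}(-\Theta)$ up to a degree zero twist; verifying (or citing) this identification and the triviality of the kernel of the principal polarization is the crux of that step. In the second part the analogous delicate point is the final translation of the group theoretic symmetry $x+\tau(x)\sim D_0$ into the existence of a genuine degree two pencil; this is where the hypothesis $g>2$ is used to guarantee both that $\tau\neq\mathrm{id}$ and that the resulting $g^1_2$ really exhibits $X$ as hyperelliptic.
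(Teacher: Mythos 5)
Your argument is correct in outline and, for most of its length, coincides with the paper's: both proofs feed twists $L_k\otimes W(\SQ)$ through Proposition \ref{K4.4}\eqref{part2} to conclude that $I_\SQ(X)$ is stable under translation by $m$ (first part) or under $v\mapsto -v-m$ (second part), and both then extract $j=0$ from the first Chern class of the Picard bundle being a theta divisor, so that $T_j^*$-invariance of $\det W(\SQ)$ forces $j\in K(\theta)=0$ --- your ``finer invariant'' worry is resolved exactly as the paper resolves it. Where you genuinely diverge is the endgame of the second part. The paper takes the involution $f$ of $X$ induced by $T_{-y}\circ i$, observes that $i^*$ acts as $-1$ on $1$-forms of $J$ so that $f^*\omega=-\omega$, and invokes Lemma \ref{invol} (the canonical map identifies $x$ with $f(x)$, hence is not an embedding). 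You instead read the relation $I_\SQ(x)+I_\SQ(\tau(x))=-m$ through the Abel--Jacobi description to get $x+\tau(x)\sim D_0$ for a fixed class $D_0$ of degree two; since these divisors move, $h^0(D_0)\ge 2$ and $D_0$ is a $g^1_2$. Both routes are standard characterisations of hyperellipticity and both are sound; yours has the small advantage of bypassing Lemma \ref{invol} and any discussion of the action on differentials, while the paper's formulation identifies $f$ explicitly as the hyperelliptic involution, which is what is actually reused in the proof of the main theorem.

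One step deserves tightening. To get $m=0$ in the first part you assert that a curve of genus $>1$ in $J$ stable under a nonzero translation ``would be a coset of a positive dimensional abelian subvariety.'' That only disposes of the case where the stabiliser $\{m : I_\SQ(X)+m=I_\SQ(X)\}$ is positive dimensional; a nonzero torsion $m$ generates a finite subgroup and is not excluded by this reasoning. (The paper is equally terse here, asserting the implication without proof.) A clean fix: $I_\SQ(X)+m=I_\SQ(X)$ implies the $(g-1)$-fold sum $W_{g-1}=I_\SQ(X)+\cdots+I_\SQ(X)$, which is a translate of the theta divisor, satisfies $W_{g-1}+m=W_{g-1}$, whence $T_m^*\SO(\theta)\cong\SO(\theta)$ and $m=0$ by principality. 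Alternatively, a nonzero $m$ would induce a fixed-point-free automorphism of $X$ acting trivially on $1$-forms, which is impossible for $g>1$. With that repair the proof is complete.
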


\begin{proof}
The first part is \cite[Proposition 9.1]{K} except that there it
is assumed that $d\,<\,0$; the proof of Proposition 9.1 uses 
\cite[Corollary 4.4]{K} which requires this hypothesis. However, the case
$d\,>\,2g-2$ can be proved similarly; for the convenience of the reader we
give the details.

Let $y\,\in\, J$ be the point corresponding to the line bundle $M$.
The line bundle on $P^d$ corresponding to any $t\,\in\, J$ will be denoted
by $L_t$. In particular, $M\,=\,L_y$. For every $t\,\in\, J$,
using the hypothesis, we have
\begin{equation}
\label{eq:1}
T^*_j(L_{t+y}\otimes W(\SQ))\,=\,T^*_jL_t \otimes T^*_j(M\otimes W(\SQ))
\,=\,L_t\otimes W(\SQ)\, ;
\end{equation}
note that the fact that a degree zero line
bundle on an Abelian variety is translation invariant is used
above. Combining \eqref{eq:1} and the fact that $T_j$ is an isomorphism, we have
$$
H^1(L_t\otimes W(\SQ)) \,\cong\, 
H^1(T^*(L_{t+y}\otimes W(\SQ))) \,\cong\, 
H^1(L_{t+y}\otimes W(\SQ))\, .
$$
Using \cite[Corollary 4.4]{K} it follows that
$t\,\in\, -I_{\SQ}(X)$ if and only if $t+y\,\in\, -I_{\SQ}(X)$.
Hence $I_{\SQ}(X)\,=\,y+I_{\SQ}(X)$. If $g(X)\,>\,1$, this implies
that $y\,=\,0$. Therefore, we have $W(\SQ)\,=\,T^*_j(W(\SQ))$. Using the fact
that $c_1(W(\SQ))\,=\,\theta$, a theta divisor, it follows that
$\theta$ is rationally equivalent to the translate $\theta-j$,
hence $j\,=\,0$.

The proof of the second part is similar. We have
\begin{equation}
\label{eq:2}
i^*T^*_j(L_{y-t}\otimes W(\SQ))\,=\,i^*T^*_jL_{-t} \otimes
i^*T^*_j(M\otimes W(\SQ))
\,=\,i^*L_{-t}\otimes W(\SQ)\,=\,L_{t}\otimes W(\SQ)\, ;
\end{equation}
the fact that $i^*L_{-t}\,=\,L_{t}$ is used above. Consequently,
$$
H^1(L_t\otimes W(\SQ)) \,\cong\, 
H^1(i^*T^*(L_{y-t}\otimes W(\SQ))) \,\cong\, 
H^1(L_{y-t}\otimes W(\SQ))\, ,
$$
and using \cite[Corollary 4.4]{K} it follows that
$t\,\in\, -I_{\SQ}(X)$ if and only if $y-t\,\in\, -I_{\SQ}(X)$.
Hence $I_{\SQ}(X)\,=\,-I_{\SQ}(X)-y$. Let
$$
f\,:\,X\,\longrightarrow\, X
$$
be the morphism
uniquely determined by the condition $$I_{\SQ}(x)\,=\,-I_{\SQ}(f(x))-y\, .$$
We note that $f$ is well defined because $-I_{\SQ}$ and $y+I_{\SQ}$ are 
two embeddings of $X$ in $J$ with the same image, 
so they differ by an automorphism of $X$
which is $f$. In other words, if we identify $X$ with its image
under $I_{\SQ}$, then $f$ is induced from the automorphism 
$T_{-y}\circ i$ of $J$. This automorphism $T_{-y}\circ i$ is clearly an involution.
Let $\omega\in H^0(C,\, \Omega_C)$ be a holomorphic 1-form on $X$. Then 
$f^*\omega\,=\,-\omega$, because of the isomorphism 
$H^0(X,\, \Omega_X)\,=\, H^0(J, \Omega_J)$ induced by $I_{\SQ}$,
and the fact that $i^*$ acts as multiplication by $-1$ on the 1-forms
on $J$. It now
follows by Lemma \ref{invol} that $f$ is a hyperelliptic involution.
\end{proof}

\begin{lemma}
\label{invol}
Let $g\,>\,1$. 
Let $f\,:\,X\,\longrightarrow\, X$ is an involution satisfying the condition
that $f^*\omega=-\omega$ for every $1$-form $\omega$. Then $X$ is hyperelliptic with
$f$ being the hyperelliptic involution.
\end{lemma}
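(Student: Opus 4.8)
The plan is to descend to the quotient of $X$ by $f$ and read off the genus of that quotient from the invariant $1$-forms. First I would note that $f$ is a genuine (nontrivial) involution: were $f$ the identity, the hypothesis $f^*\omega = -\omega$ would give $\omega = -\omega$, hence $\omega = 0$, for every $\omega \in H^0(X,\Omega_X)$, contradicting $g > 1$. Thus $G := \langle f\rangle \cong \ZZ/2\ZZ$ acts nontrivially on $X$, and since the quotient of a smooth projective curve by a finite group is again a smooth projective curve, the quotient map $\pi \colon X \to Y := X/G$ is a morphism of degree $2$ onto a smooth curve $Y$.

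The central step is the standard identification of the $f$-invariant holomorphic $1$-forms on $X$ with the forms pulled back from $Y$, namely that $\pi^*\colon H^0(Y,\Omega_Y) \to H^0(X,\Omega_X)^{f}$ is an isomorphism, so that $g(Y) = \dim H^0(X,\Omega_X)^{f}$. By hypothesis $f^*$ acts as $-\id$ on the whole of $H^0(X,\Omega_X)$, so the invariant subspace $H^0(X,\Omega_X)^{f}$ is zero; hence $g(Y) = 0$ and $Y \cong \PP^1$.

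It then remains to conclude. The map $\pi \colon X \to \PP^1$ has degree $2$, and since $g(X) > 1$ this exhibits $X$ as a hyperelliptic curve, with $f$ being, by construction, the deck transformation of this double cover. Because a curve of genus $> 1$ possesses a unique $g^1_2$, this double cover is necessarily the hyperelliptic one and $f$ is precisely the hyperelliptic involution, as asserted. Equivalently, one may argue that $f$ acts trivially on the canonical image in $\PP^{g-1}$ since it acts by $-\id$ on $H^0(X,\Omega_X)$, so that the canonical map factors through $\pi$; a non-hyperelliptic curve, whose canonical map is an embedding, would then yield a contradiction.

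The step requiring the most care is the identification $\pi^* H^0(Y,\Omega_Y) = H^0(X,\Omega_X)^{f}$. An $f$-invariant form manifestly descends over the open locus where $\pi$ is \'etale, but one must verify that the descended form extends holomorphically across the finitely many ramification points of $\pi$; this is a short local computation in a coordinate adapted to the $\ZZ/2\ZZ$-action. The remaining ingredients — smoothness of $Y$, and the uniqueness of the $g^1_2$ in genus $> 1$ — are classical.
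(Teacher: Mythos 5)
Your proof is correct, but it takes a genuinely different route from the paper. You pass to the quotient $Y = X/\langle f\rangle$, identify $H^0(Y,\Omega_Y)$ with the $f$-invariant forms on $X$, conclude $g(Y)=0$ from the hypothesis that $f^*$ acts as $-\id$, and thus exhibit $\pi\colon X\to \PP^1$ of degree $2$ with $f$ as its deck transformation; uniqueness of the $g^1_2$ then pins $f$ down as the hyperelliptic involution. The paper instead argues entirely through the canonical morphism $F\colon X\to \PP(H^0(X,K_X))$: since $\omega(x)=0$ if and only if $\omega(f(x))=0$, the hyperplanes $H^0(\Omega_X(-x))$ and $H^0(\Omega_X(-f(x)))$ coincide, so $F(x)=F(f(x))$ and $F$ fails to be an embedding (using that $f\neq \id$), which forces $X$ to be hyperelliptic with $f$ the hyperelliptic involution. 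The paper's argument is shorter and needs no quotient construction or descent of differentials, only the characterisation of hyperelliptic curves as those with non-injective canonical map; your argument does more work (the local extension of descended forms across ramification points, smoothness of the quotient) but in return produces the $g^1_2$ explicitly rather than inferring its existence. Your closing remark about the canonical map factoring through $\pi$ is essentially the paper's proof. One small caveat common to both arguments: over a field of characteristic $2$ the hypothesis $f^*\omega=-\omega$ degenerates (and your step showing $f\neq\id$, like the paper's, uses $-1\neq 1$), but this is an issue with the statement as applied, not a defect of your proof relative to the paper's.
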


\begin{proof}
Consider the canonical morphism 
$$
F\,:\,X \,\longrightarrow\, \PP(H^0(X,\,K_X))
$$
that sends any $x\,\in \,X$ to the hyperplane $H^0(K_X(-x))$
in $H^0(K_X)$. By definition,
$$
H^0(\Omega_X(-x))=\{\omega\,\in\, H^0(\Omega_X)\,\mid\,\omega(x)=0\}\, ,
$$
but the hypothesis implies that $\omega(x)\,=\,0$ if and only if 
$\omega(f(x))\,=\,f^*(\omega)(x)\,=\,0$. Therefore, we have
$$
H^0(\Omega_X(-x))\,=\, H^0(\Omega_X(-f(x)))\, ,
$$
and it follows that $F(x)\,=\,F(f(x))$, implying that the canonical morphism is not
an embedding; note that $f$ is not the identity because there are nonzero
holomorphic 1-forms. Therefore, $X$ is hyperelliptic, and $f$ is the
hyperelliptic involution.
\end{proof}

\section{Proof of Theorem \ref{thmi}}
\label{sec:proof}

Using the morphism $X\, \longrightarrow\, \text{Sym}^d(X)$, $y\, \longmapsto\,
dy$, it follows that the homomorphism $\rho$ in Theorem \ref{thmi} is injective.

Fix a point $x\,\in\, X$. Let $\mathcal{L}$ be the normalised Poincar\'e
line bundle on $X\times J(X)$, i.e., it is trivial when restricted to the
slice $\{x\}\times J(X)$. Let
$$
E\,:=\,q_*(\mathcal{L}\otimes p^*\SO_X(dx))
$$
be the Picard bundle,
where $p$ and $q$ are the projections from $X\times J(X)$ to $X$ and $J(X)$
respectively. Since $d\,>\, 2g-2$, it follows that $E$ is a vector bundle
of rank $d-g+1$.

We will identify $\Sym^d(X)$ with the projective bundle $P(E)\,=\, \PP(E^\vee)$.

Let $\theta$ be the theta divisor of $J(X)$; in particular, we have 
$\theta^g\,=\, g!$. The Chern
classes of $E$ are given by $c_i(E)\,=\,\theta^i/i!$ \cite{ACGH}.

The Albanese variety of $P(E)$ is the Jacobian $J(X)$,
and the Albanese map sends an effective divisor $\sum_{\ell=1}^d P_\ell$ of
degree $d$ to the degree zero line bundle $\SO_X((\sum_{\ell=1}^d P_\ell) - dx)$.
Given an automorphism
$$\varphi\,:\, P(E)\,\longrightarrow\, P(E)\, ,$$ the universal
property of the Albanese variety yields a commutative diagram
\begin{equation}
\label{eq:albanese}
\xymatrix{
{P(E)} \ar[r]_{\cong}^{\varphi} \ar[d] & {P(E)} \ar[d]\\
{J(X)} \ar[r]_{\cong}^{\alpha} & {J(X)}
}
\end{equation}
and this produces an automorphism of projective bundles
$$
\xymatrix{
{P(E)} \ar[rr]^{\psi}_{\cong} \ar[rd] &&
{P(\alpha^* E)}\ar[ld] \\
& {J(X)} &
}
$$
Therefore, there is a line bundle $L$ on $J(X)$ such that there is an isomorphism
\begin{equation}\label{eq:fl}
\alpha^* E \,\cong\, E\otimes L\, .
\end{equation}
There is a commutative diagram of groups
\begin{equation}\label{eq:diag}
\xymatrix{
\Aut(P(E)) \ar[r]^\lambda & \Aut(J(X))\\
\Aut(X) \ar[u]^\rho \ar[ur]_{\mu}
}
\end{equation}
where $\lambda$ is constructed as above using the universal property of the Albanese
variety given in \eqref{eq:albanese}, and $\rho$ is the homomorphism in Theorem
\ref{thmi}. To construct $\mu$, note that the commutativity of the diagram
\eqref{eq:albanese} implies that $\mu(f)$, $f\, \in\, \Aut(X)$, 
has to send $\SO_X((\sum_{\ell=1}^d P_\ell)-dx)$
to $\SO_X((\sum_{\ell=1}^d f(P_\ell))-dx)$. A short calculation yields
\begin{equation}\label{eq:muf}
\mu(f)\,=\,(f^{-1})^*\circ T_{dx-df^{-1}(x)}\, ,
\end{equation}
where $T_a$, $a\,\in\, J(X)$, is translation on $J(X)$ by $a$. 

Let $\theta'\,=\,c_1(\alpha^*E)\,=\,\theta+L$. Then
$$
c_i(\alpha^* E) \,=\, \alpha^* c_i(E)\,=\, \frac{\alpha^* \theta^i}{i!}\,
= \,\frac{ \theta'{}^i}{i!}\, ,
$$
and $\theta'{}^g\,=\,\alpha^* \theta^g\,=\,g!$.
Now we apply \cite[Lemma 1]{F}; here the condition $g\,>\,2$ is used. 
We obtain $\theta^i\,=\,\theta'{}^i$ for all $i\,>\,1$.

We identify $X$ with the image in $J(X)$ of the Abel-Jacobi map. 
In particular $X$ is numerically equivalent to $\theta^{g-1}/(g-1)!$.
We calculate the intersection (note that the condition $g>2$ is again used, because
we need $g-1>1$)
$$
\theta' X = \theta' \frac{\theta^{g-1}}{(g-1)!} = 
\theta' \frac{\theta'{}^{g-1}}{(g-1)!} = g\, .
$$
Invoking a characterisation of a Jacobian variety due to Collino and Ran,
\cite{C}, \cite{R}, it follows that $(J(X)\, ,\theta'\, , X)$ is a Jacobian triple,
i.e., $\theta'$ is a theta divisor of the Jacobian variety $J(X)$ up
to translation. This means that $\theta$ and $\theta'$ differ by
translation, in other words, the class of $c_1(L)$ in the N\'eron-Severi group
$\text{NS}(J(X))$ is zero. Consequently, $\alpha$ is an isomorphism of polarised Abelian
varieties, i.e., it sends $\theta'$ to a translate of it.

The strong form of the classical Torelli 
theorem (\cite[Th\'eor\`eme 1 and 2 of Appendix]{LS}) tells us that
such an automorphism $\alpha$ is of the form
$$
\alpha\,=\,F \circ \sigma \circ T_a\, ,\ \ \sigma\,\in\, \{1\, ,\iota\}\, ,
$$
where $F\,=\,(f^{-1})^*$ for an automorphism $f$ of $X$, while $T_a$ is
translation by an element $a\,\in\, J(X)$ 
and $\iota$ sends each element of $J(X)$ to its inverse.
If $X$ is hyperelliptic, then $\iota$ is induced by
the hyperelliptic involution, so we may assume that $\sigma$ is the
identity map of $X$ when $X$ is hyperelliptic.

Let $f$ be an automorphism of $X$ with $F\,=\,(f^{-1})^*$ being the induced
isomorphism on $J(X)$. Using the definition of $E$, it is easy to check that 
$$
F^* E \,\cong \,T^*_{dx'-dx} E\, ,
$$
where $x'\,=\,f^{-1}(x)$. 

We claim that $\alpha\,=\,F\circ T_a$.

To prove this, assume that $\alpha\,\not=\,F\circ T_a$. Then
$X$ is not hyperelliptic, and $\alpha\,=\,F\circ \iota \circ T_a$. Hence
$$
\alpha^* E\,=\,T_a^* \iota^* F^* E \,=\, T_a^* \iota^* T_{dx'-dx}^* E \, ,
$$
and using \eqref{eq:fl},
$$
E \,\cong\, \iota^* T^*_{dx'-dx-a}(E\otimes L)\, .
$$
Now from Proposition \ref{prop91} it follows that $X$ is hyperelliptic, and
we arrive at a contradiction. This proves the claim.

Summing up, we can assume that $\alpha\,=\,F\circ T_a$. 
Using \eqref{eq:fl}, 
$$
 E \,\cong \, T^*_{dx-dx'-a}(E\otimes L)
$$
{}From Proposition \ref{prop91} it follows that $L$ is the trivial line
bundle, and $a\,=\,dx-dx'$. Therefore,
$$\alpha\,=\,(f^{-1})^* \circ T_{dx-df^{-1}(x)}$$ for some automorphism $f$ of 
$X$, and hence, by \eqref{eq:muf}, 
\begin{equation}\label{eq:inj}
{\rm Image}(\lambda)\, \subset\, {\rm Image}(\mu)\, .
\end{equation}

We will now show that the morphism $\lambda$ is injective.

Suppose $\alpha\,=\,\lambda(\varphi)\,=\,{\rm Id}_{J(X)}$. 
Using \eqref{eq:fl}, the morphism $\varphi$ is induced by
an isomorphism between $E$ and
$E\otimes L$. We have just seen that $L$ is trivial, the
morphism $\varphi$ is induced by an automorphism of $E$,
and this automorphism has to be
multiplication by a nonzero scalar, because $E$ is stable with respect
to the polarisation given by the theta divisor (cf. \cite{EL}).
Therefore, the morphism $\varphi$ is the identity.
This proves that the morphism $\lambda$ is injective.

The homomorphism $\mu$ is also injective, since it is a composition of
a translation and the pullback induced by an automorphism of $X$.

Combining these it follows that the morphism $\rho$ is also injective (this can also
be checked directly), and hence
all the homomorphisms in the diagram \eqref{eq:diag} are
injective. This, combined with \eqref{eq:inj}, 
shows that $\rho$ is an isomorphism.

\end{document}